\documentclass[10pt, reqno]{amsart}

\usepackage{amsfonts}
\usepackage{amsthm}
\usepackage{mathrsfs}
\usepackage{amsmath}
\usepackage{epsfig}
\usepackage{url}

\theoremstyle{theorem}
\newtheorem{theorem}{Theorem}
\newtheorem{corollary}[theorem]{Corollary}
\newtheorem{lemma}[theorem]{Lemma}

\theoremstyle{definition}

\newtheorem*{remark}{Remark}

\DeclareMathOperator{\dist}{dist}

\newcommand{\M}{\mathbb{M}}
\newcommand{\C}{\mathbb{C}}
\newcommand{\Mydef}{\stackrel{\mbox{\footnotesize{\rm def}}}{=}}

\begin{document}
\title{A Resolvent Criterion for Normality}
\author{Cara D. Brooks}
\email{cbrooks@fgcu.edu}
\author{Alberto A. Condori}
\email{acondori@fgcu.edu}

\address{Department of Mathematics, Florida Gulf Coast University, Fort Myers, FL 33965}
\maketitle

\begin{abstract}
			Given a normal matrix $A$ and an arbitrary square matrix $B$ 
			(not necessarily of the same size), what relationships between 
			$A$ and $B$, if any, guarantee that $B$ is also a normal matrix?   
			We provide an answer to this question in terms of pseudospectra and 
			norm behavior.  In doing so, we prove that a certain distance formula, 
			known to be a necessary condition for normality, is in fact sufficient 
			and demonstrates that the spectrum of a matrix can be used to recover 
			the spectral norm of its resolvent precisely when the matrix is normal. 
			These results lead to new normality criteria and other interesting 
			consequences.  
\end{abstract}

\section{Normality, Pseudospectra and Norm Behavior.}

Let $n$ be a natural number and let $\M_{n}$ denote the set 
of all $n\times n$ matrices with entries in the complex plane $\C$.  
Denote by $I$ and $0$  the identity and zero matrices, respectively, 
whose sizes are understood in context.  

For $A\in\M_n$, let $A^{*}$ denote the conjugate transpose of $A$.  The 
spectrum $\sigma(A)$ of $A$ is the set of all of its eigenvalues; that is,
\[	\sigma(A)=\{z\in\C: zI-A\text{ is not invertible}\}
		=\{z\in\C: \det(zI-A)=0\}.	\]
		
In this article, we are interested in the notion of \emph{normality}.  Recall 
that a matrix $A\in\M_n$ is said to be \textbf{normal} if it commutes with 
its conjugate transpose, i.e., if
\[	A^{*}A=AA^{*}.	\]
In the case that $A^{*}A=AA^{*}=I$, the matrix $A$ is called 
\textbf{unitary}\footnote{For matrices with real entries, unitary matrices 
are called orthogonal.}.  Equivalently, $A$ is unitary if it is invertible 
and has $A^{*}$ as its inverse.   Two matrices $A$ and $B$ are said to be 
\textbf{unitarily similar} if there is a unitary matrix $U$ so that $A=UBU^{*}$.
  
The simplest example of an $n\times n$ normal matrix is a \textbf{diagonal} 
matrix, that is, a matrix of the form
\begin{equation}\label{diagWithLambdas}
			\Lambda=\left[
			\begin{array}{cccc}
						\lambda_1	&	0					&	\ldots	&	0\\
						0					&	\lambda_2	&	\ldots	&	0\\
						\vdots		&	\vdots		&	\ddots	&	\vdots\\
						0					&	0					&	\ldots	&	\lambda_n
			\end{array}\right].
\end{equation}
Notice that the eigenvalues of $\Lambda$ are precisely the entries on its
main diagonal and so $\sigma(\Lambda)=\{\lambda_1,\ldots,\lambda_n\}$.

Normal matrices are essentially diagonal according to the spectral 
theorem (for normal matrices) which states that a matrix $N\in \M_n$ is 
normal if and only $N$ is unitarily similar to a diagonal matrix. 
(See Theorem 2.5.4 on page 101 in \cite{HJ1}.)

So we ask, if given a normal matrix $A$ and an arbitrary square matrix $B$, 
what relationships between $A$ and $B$, if any, guarantee that $B$ is also 
a normal matrix?     

Certainly, by the Spectral Theorem stated above,  a ``non-metric'' relationship 
that guarantees the normality of $B$ from that of $A$ is unitary similarity.  
This however requires that $A$ and $B$ also have the same size.  

And so instead we ask, 
\begin{quote}
			\emph{Given a normal matrix $A$ and an arbitrary square matrix $B$ (not 
			necessarily of the same size), what ``metric'' relationships between 
			$A$ and $B$, if any, guarantee that B is also a normal matrix?}
\end{quote}

Two such relationships that come to mind are ``identical pseudospectra''
and ``same norm behavior.'' To describe these notions, we first need to 
choose a norm for matrices. 

For $A\in\M_n$, we define the (spectral) norm $\|A\|$ of $A$ by
\[	\|A\|=\sup\{\|Av\|_2: \|v\|_2=1\},	\]
where $\|v\|_2$ denotes the Euclidean norm of the vector $v\in\C^n$, i.e.,
\[	\|v\|_2=\sqrt{|v_1|^2+\cdots+|v_n|^2}\;\text{ if }v=(v_1,\ldots,v_n).	\]  
(The subscript allows one to differentiate between the norms.)  A useful 
feature of the matrix norm chosen here is \textbf{unitary invariance}, 
that is, 
\begin{equation}\label{unitInv}
			\|VTU\|=\|T\|
\end{equation}
for any $T\in\M_n$ and $n\times n$ unitary matrices $U$ and $V$.

Two square matrices $A$ and $B$ (\emph{not} necessarily of the same size) 
have \textbf{identical pseudospectra}\footnote{In view of inequality 
\eqref{distIneq} below, we adopt the convention that $\|(zI-A)^{-1}\|=\infty$ 
for $z\in\sigma(A)$.} if
\begin{equation}\label{pseudospectraAB}
			\|(zI-A)^{-1}\|=\|(zI-B)^{-1}\|\quad\text{ for all }z\in\C.
\end{equation} 

In the case of a \emph{normal} matrix $A\in\M_n$, the spectral theorem
allows one to compute the norm of the \textbf{resolvent} $(zI-A)^{-1}$ 
of $A$.  By that theorem, there is a diagonal matrix $\Lambda$ and a 
unitary matrix $U$ so that $A=U\Lambda U^{*}$ and so 
$zI-A=U(zI-\Lambda)U^{*}$ for any $z\in\C$.  It follows that 
\[	\|(zI-A)^{-1}\|=\|(zI-\Lambda)^{-1}\|
		=\max\{|z-\lambda|^{-1}:\lambda\in\sigma(A)\}	\]
or equivalently\footnote{If $z\in\C$ and $E\subseteq\C$, 
we define $\dist(z,E)=\inf\{|z-w|:\,w\in E\}$.},
\begin{equation}\label{resolventFormula}
		\|(zI-A)^{-1}\|=\frac{1}{\dist(z,\sigma(A))}\;
		\text{ for }z\notin\sigma(A).
\end{equation}

As the following theorem confirms, the condition of identical 
pseudospectra guarantees that $B$ is normal whenever $A$ is.

\begin{theorem}\label{pseudopCondThm}
			Suppose $A$ and $B$ are square matrices\footnote{In 
			Theorem \ref{pseudopCondThm}, the matrices $A$ and $B$ 
			are \emph{not} assumed to have the same size.}.
			If $A$ is normal, and $A$ and $B$ have identical pseudospectra,
			then $B$ is normal.
\end{theorem}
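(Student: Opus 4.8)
The plan is to compress the hypothesis of identical pseudospectra into a single distance formula for $B$ and then to show that this formula alone forces normality. First I would extract equality of spectra: with the convention that $\|(zI-A)^{-1}\|=\infty$ exactly when $z\in\sigma(A)$, the identity \eqref{pseudospectraAB} says each side is infinite precisely on its own spectrum, so $z\in\sigma(A)\iff z\in\sigma(B)$ and hence $\sigma(A)=\sigma(B)$. With the spectra equal, I would feed the normal resolvent formula \eqref{resolventFormula} into \eqref{pseudospectraAB}: for every $z\notin\sigma(A)=\sigma(B)$,
\[
\|(zI-B)^{-1}\|=\|(zI-A)^{-1}\|=\frac{1}{\dist(z,\sigma(A))}=\frac{1}{\dist(z,\sigma(B))}.
\]
Thus everything reduces to the implication: if $B$ satisfies $\|(zI-B)^{-1}\|=1/\dist(z,\sigma(B))$ for all $z\notin\sigma(B)$, then $B$ is normal. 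This is the promised sufficiency of the distance formula, and it is the heart of the matter.

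To attack it I would recast the formula in terms of singular values. Writing $\|(zI-B)^{-1}\|=1/\sigma_{\min}(zI-B)$, where $\sigma_{\min}$ is the smallest singular value, the hypothesis becomes the pointwise rigidity $\sigma_{\min}(zI-B)=\dist(z,\sigma(B))$. The inequality $\sigma_{\min}(zI-B)\le\dist(z,\sigma(B))$ always holds (apply $(zI-B)^{*}$ to a unit eigenvector of $B^{*}$ for the nearest eigenvalue $\lambda$, whose image has norm exactly $|z-\lambda|$), so the real content is that equality is attained for every $z$.

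The decisive step is a differential comparison of the two sides, and I expect this to be the main obstacle. Fix a point $z$ at which $\sigma_{\min}(zI-B)$ is a simple singular value, with unit left/right singular vectors $u,v$ obeying $(zI-B)v=\sigma_{\min}u$ and $(zI-B)^{*}u=\sigma_{\min}v$. The standard first-order perturbation formula for a simple singular value gives, in the real variables $\operatorname{Re}z$ and $\operatorname{Im}z$, the gradient magnitude $|\nabla\sigma_{\min}|=|u^{*}v|\le\|u\|_{2}\|v\|_{2}=1$, with equality if and only if $u$ and $v$ are parallel. On the other hand, away from the points equidistant from two eigenvalues we have $\dist(z,\sigma(B))=|z-\lambda|$ for the unique nearest eigenvalue $\lambda$, so $|\nabla\dist|=1$. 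Since the two functions coincide, their gradients coincide, forcing $|u^{*}v|=1$ and hence $u\parallel v$. Parallelism turns the singular-vector relations into eigenvector relations, so $v$ is simultaneously an eigenvector of $B$ and of $B^{*}$; matching the eigenvalue against $\dist(z,\sigma(B))=|z-\lambda|$ yields $Bv=\lambda v$ and $B^{*}v=\bar\lambda v$. The genuine technical work here is the bookkeeping on the exceptional set where $\sigma_{\min}$ fails to be simple or where the nearest eigenvalue is not unique; this set is lower-dimensional, and the conclusion should extend to it by continuity.

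To finish I would harvest these common eigenvectors. Common eigenvectors of $B$ and $B^{*}$ for distinct eigenvalues are automatically orthogonal: if $Bv=\lambda v$, $B^{*}v=\bar\lambda v$ and $Bw=\mu w$ with $\mu\ne\lambda$, then $\lambda\langle w,v\rangle=\langle w,B^{*}v\rangle=\langle Bw,v\rangle=\mu\langle w,v\rangle$ forces $\langle w,v\rangle=0$. Letting $z$ range over the region closest to each eigenvalue produces one such vector per eigenvalue, and a short induction — peeling off the one-dimensional reducing subspace spanned by a common eigenvector and checking that the compression of $B$ to its orthogonal complement again inherits the distance formula — shows that these vectors span $\C^{n}$. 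An orthonormal basis of common eigenvectors then diagonalizes $B$ unitarily, so $B$ is normal by the spectral theorem, which completes the argument.
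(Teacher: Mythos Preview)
Your opening paragraph is exactly the paper's proof of Theorem~\ref{pseudopCondThm}: the paper extracts $\sigma(A)=\sigma(B)$ from \eqref{pseudospectraAB}, transfers the normal resolvent formula \eqref{resolventFormula} to $B$, and then invokes Theorem~\ref{normalIff} (the distance-formula criterion) as a separately stated result. Where you diverge is in supplying a proof of that criterion inline, and your route is genuinely different from the paper's. The paper argues via Schur: write $T=ULU^{*}$ with $L$ lower triangular, pass the hypothesis to $L$, and for each $k$ choose a single $z$ in the Voronoi cell of the diagonal entry $\lambda_{k+1}$; the block form \eqref{blockRes} of $(zI-L_{k+1})^{-1}$ together with $\|(zI-L)^{-1}\|=|z-\lambda_{k+1}|^{-1}$ then forces the off-diagonal column $b_k$ to vanish, so $L$ is diagonal. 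This is entirely elementary---no differentiation, no genericity assumptions, no induction on reducing subspaces.

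Your gradient argument has a genuine gap. The perturbation identity $|\nabla\sigma_{\min}|=|u^{*}v|$ is valid only where $\sigma_{\min}(zI-B)$ is a \emph{simple} singular value, and your claim that the non-simple locus is ``lower-dimensional'' fails in general: for $B=0\in\M_n$ with $n\ge2$ the smallest singular value of $zI$ has multiplicity $n$ at every $z$, and more generally any repeated eigenvalue of a normal $B$ forces multiplicity in $\sigma_{\min}$ throughout its entire Voronoi cell---precisely the region where you need the argument. The idea can be rescued without simplicity: for any unit right singular vector $v$ at a point $z_0$ with unique nearest eigenvalue $\lambda$, set $g(z)=\|(zI-B)v\|_2^{2}$; then $g(z)\ge\sigma_{\min}(zI-B)^{2}=|z-\lambda|^{2}$ with equality at $z_0$, and since $g(z)-|z-\lambda|^{2}$ is affine in $(\operatorname{Re}z,\operatorname{Im}z)$ it must vanish identically, giving $\langle Bv,v\rangle=\lambda$ and $\|Bv\|_2=|\lambda|$, hence $Bv=\lambda v$ by the equality case of Cauchy--Schwarz, and then $B^{*}v=\bar\lambda v$ from the singular-vector equations. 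With that repair your peeling induction does go through, but the paper's Schur argument reaches the same conclusion with far less machinery.
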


To prove Theorem \ref{pseudopCondThm}, it is convenient to know whether 
the validity of the distance formula \eqref{resolventFormula} for an 
``arbitrary'' matrix $B$ implies that $B$ is normal. This turns out to be true.

\begin{theorem}\label{normalIff}
			For a matrix $T \in \M_{n}$ to be normal, it is necessary and 
			sufficient that
			\begin{equation}\label{distFormula}
						\|(zI-T)^{-1}\|=\frac{1}{\dist(z,\sigma(T))}\;
		\text{ for }z\notin\sigma(T).
			\end{equation}
\end{theorem}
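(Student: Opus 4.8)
The necessity is exactly the computation recorded in \eqref{resolventFormula}, so the plan is to establish sufficiency: assuming the distance formula \eqref{distFormula}, I would deduce that $T$ is normal. The starting point is the partial-fraction (Laurent) expansion of the resolvent. Writing $\lambda_1,\dots,\lambda_p$ for the distinct eigenvalues of $T$, $P_j$ for the associated spectral (Riesz) projections, and $N_j=(T-\lambda_j I)P_j$ for the nilpotent parts with indices $m_j$, one has
\[
(zI-T)^{-1}=\sum_{j=1}^{p}\left(\frac{P_j}{z-\lambda_j}+\sum_{k=1}^{m_j-1}\frac{N_j^{\,k}}{(z-\lambda_j)^{k+1}}\right)
\]
for $z\notin\sigma(T)$; here $P_j^2=P_j$, $\sum_j P_j=I$, $P_iP_j=0$ for $i\ne j$, and $T=\sum_j(\lambda_jP_j+N_j)$.

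The key idea is to extract the leading singular behavior of both sides of \eqref{distFormula} as $z\to\lambda_j$. Since $\sigma(T)$ is finite, for $z$ close enough to $\lambda_j$ the nearest eigenvalue is $\lambda_j$ itself, so $\dist(z,\sigma(T))=|z-\lambda_j|$ and the right-hand side of \eqref{distFormula} is exactly $1/|z-\lambda_j|$. Multiplying the expansion above by $(z-\lambda_j)$ and letting $z\to\lambda_j$, I would compute
\[
\lim_{z\to\lambda_j}|z-\lambda_j|\,\|(zI-T)^{-1}\|.
\]
If some $m_j\ge 2$, the term $N_j^{\,m_j-1}/(z-\lambda_j)^{m_j}$ dominates and this limit is $+\infty$; but \eqref{distFormula} forces the limit to equal $1$. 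Hence every $m_j=1$, so $T$ is diagonalizable with each $N_j=0$, and the same limit then yields $\|P_j\|=1$ for every $j$.

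It remains to convert the norm conditions $\|P_j\|=1$ into self-adjointness. I would invoke the elementary fact that a nonzero idempotent $P$ satisfies $\|P\|=1$ if and only if $P=P^{*}$ (equivalently, $P$ is an orthogonal projection). The forward direction, which is what I need, follows by taking $v\in\operatorname{ran}P$ and $u\in\ker P$: from $P(v+tu)=v$ one gets $\|v\|\le\|v+tu\|$ for all $t\in\C$, and minimizing the right side over $t$ forces $\langle u,v\rangle=0$, so $\ker P\perp\operatorname{ran}P$. Applying this to each $P_j$ gives $P_j=P_j^{*}$. With all spectral projections orthogonal and mutually annihilating, $T=\sum_j\lambda_jP_j$ yields $T^{*}=\sum_j\overline{\lambda_j}P_j$ and hence $T^{*}T=\sum_j|\lambda_j|^2P_j=TT^{*}$, so $T$ is normal.

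The step I expect to be the crux on which everything turns is the passage from the metric data $\|P_j\|=1$ to the algebraic conclusion $P_j=P_j^{*}$; this norm-one-idempotent lemma is precisely where orthogonality of the spectral subspaces, and thus normality, is forced. The asymptotic extraction near each eigenvalue is the mechanism that delivers $\|P_j\|=1$, and care is only needed to confirm that $\dist(z,\sigma(T))=|z-\lambda_j|$ for $z$ near $\lambda_j$ and that a genuine pole of order $m_j\ge2$ would violate the prescribed first-order blow-up.
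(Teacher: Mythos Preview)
Your argument is correct and proceeds along a genuinely different line from the paper's. The paper works through Schur's triangularization: writing $T=ULU^{*}$ with $L$ lower triangular, it peels off the principal submatrices $L_k$, uses the block form of $(zI-L_{k+1})^{-1}$, and shows that the distance formula forces each off-diagonal column $b_k$ to vanish, so $L$ is diagonal. Your route is spectral-theoretic rather than triangular: you read the Laurent expansion of the resolvent near each eigenvalue, use the prescribed first-order blow-up to kill the nilpotent parts and pin $\|P_j\|=1$, and then invoke the norm-one-idempotent lemma to get $P_j=P_j^{*}$. The paper's approach is more elementary in its prerequisites (only Schur and block inversion) and, because it needs the equality at a single well-chosen $z$ per diagonal entry, immediately yields the finite-point criterion recorded afterward as condition~2 of Theorem~\ref{completethm}; your approach requires the formula along a sequence $z\to\lambda_j$, so it does not directly give that refinement, but it is cleaner conceptually, isolates exactly where orthogonality enters (the idempotent lemma), and transports essentially verbatim to bounded operators with finite spectrum on a Hilbert space.
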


The necessity of \eqref{distFormula} was addressed in our remarks prior 
to the statement of Theorem \ref{pseudopCondThm} and is well known, e.g.,
see problem 6.42 on page 62 in \cite{K}.  In Section \ref{distSection} 
below, we establish the sufficiency of \eqref{distFormula} and some of 
its consequences.

Thus, in the remainder of this section, we prove Theorem \ref{pseudopCondThm} 
(assuming the validity of Theorem \ref{normalIff}) and discuss its 
consequences;  Theorem \ref{normalIff} is proved in the next section.

\begin{proof}[Proof of Theorem \ref{pseudopCondThm}]
			Suppose that $A$ is normal, and that $A$ and $B$ have identical 
			pseudospectra. Not only does \eqref{pseudospectraAB} imply that $A$ 
			and $B$ have the same spectrum $\sigma$, it also implies that
			\[	\|(zI-B)^{-1}\|=\frac{1}{\dist(z,\sigma)}
					\quad\text{ for all }z\notin\sigma	\]
			by \eqref{resolventFormula} because $A$ is normal.  Thus, the 
			normality of $B$ follows from Theorem \ref{normalIff}.
\end{proof}

Recall that if $p(z)=c_0+c_1 z+c_2 z^2+\cdots+c_m z^m$ is a polynomial
with complex coefficients and $T\in\M_n$, then $p(T)$ denotes the $n\times n$ 
matrix defined by
\[	p(T)=c_0 I+c_1 T+c_2 T^2+\cdots+c_m T^m.	\] 
Using this definition, it can be verified that, for fixed $T$, the 
mapping $p\mapsto p(T)$ is both linear and multiplicative.

Two square matrices $A$ and $B$ (\emph{not} necessarily of the same 
size) have the \textbf{same norm behavior} if 
\begin{equation}\label{normbehavior}
			\|p(A)\|=\|p(B)\| \quad \text{ for all polynomials } p.
\end{equation}

It is worth mentioning that the norm of a polynomial and the norm 
of the resolvent of a matrix appear naturally in applications.  For 
instance, the stability of a linear dynamical system is determined
by the norm $\|p(A)\|$ for suitable polynomials $p$.  For example, given 
a discrete system $v_{k+1}=Av_k$, $k\geq 0$, we have $v_{k}=A^{k}v_0$ for 
all $k\geq 0$ and so it suffices to use 
$p(z)=z^{k}$.  Likewise, given a continuous system $y^{\prime}=Ay$,
there is a polynomial $p_{t}$ with coefficients depending on $t$ so that
$p_{t}(A)=\exp(tA)$ (see \cite{Pu}) and so the behavior of $y$ is determined 
by $\|p_{t}(A)\|$ because $y=\exp(tA)y(0)$.  Furthermore,  if $\|(zI-A)^{-1}\|$ 
is known, one can obtain (not necessarily sharp) upper bounds for $\|p(A)\|$ 
using the Cauchy integral formula
\[	p(A)=\frac{1}{2\pi i}\int_{\Gamma}(zI-A)^{-1}p(z)\,dz,	\]
where $\Gamma$ is any contour enclosing $\sigma(A)$ (see page 46 in \cite{K}). 

In \cite{GT}, Greenbaum and Trefethen showed that if two matrices have 
the same norm behavior, then they also have identical pseudospectra.
Therefore, an application of their result and Theorem \ref{pseudopCondThm} 
give

\begin{corollary}\label{sameNormBehaviorCor}
			Suppose $A$ and $B$ are square matrices\footnote{In Corollary 
			\ref{sameNormBehaviorCor}, the matrices $A$ and $B$ are \emph{not} 
			assumed to have the same size.}.  If $A$ is normal, and 
			$A$ and $B$ have the same norm behavior, then $B$ is normal.
\end{corollary}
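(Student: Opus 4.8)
The plan is to obtain the corollary as a composition of two results already available to us: the Greenbaum--Trefethen implication that same norm behavior forces identical pseudospectra, followed by Theorem \ref{pseudopCondThm}, which transfers normality across identical pseudospectra. Concretely, assuming $A$ is normal and that $A$ and $B$ have the same norm behavior in the sense of \eqref{normbehavior}, I would first invoke the result of \cite{GT} to conclude that $A$ and $B$ satisfy \eqref{pseudospectraAB}, i.e. that they have identical pseudospectra. With that in hand, Theorem \ref{pseudopCondThm} applies verbatim: its hypotheses, namely that $A$ is normal and that $A$ and $B$ have identical pseudospectra, are exactly met, and it makes no assumption that $A$ and $B$ share a size. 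The normality of $B$ follows. Granting the external input, this is a two-line argument.

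Since the corollary is essentially a chaining of two black boxes, the only genuine mathematical content one might wish to reconstruct is the Greenbaum--Trefethen implication itself, and this is where I would expect the real work to lie. To see why same norm behavior should imply identical pseudospectra, I would fix $z$ with $|z|$ larger than the spectral radii of both $A$ and $B$ and expand the resolvent as a Neumann series, $(zI-A)^{-1}=\sum_{j\ge 0} z^{-(j+1)}A^{j}$. The partial sums $q_k(A)$, with $q_k(w)=\sum_{j=0}^{k}z^{-(j+1)}w^{j}$, are polynomials in $A$, so \eqref{normbehavior} gives $\|q_k(A)\|=\|q_k(B)\|$ for every $k$; letting $k\to\infty$ and using continuity of the norm yields $\|(zI-A)^{-1}\|=\|(zI-B)^{-1}\|$ for all such large $z$.

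The delicate step, and the main obstacle in any self-contained treatment, is propagating this equality from the exterior region $|z|>\max\{\rho(A),\rho(B)\}$ to all $z\in\C$. One cannot simply continue the power-series manipulation inward, since the resolvent norm $z\mapsto\|(zI-A)^{-1}\|$ is not holomorphic; instead one must exploit the analytic structure of the resolvent map together with a continuation (or subharmonicity) argument of the kind carried out in \cite{GT}. For the present corollary, however, none of this needs to be reproduced: it suffices to cite \cite{GT} for the implication and then apply Theorem \ref{pseudopCondThm}, so the proof reduces to the clean composition described in the first paragraph.
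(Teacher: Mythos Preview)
Your proof is correct and matches the paper's approach exactly: the paper states the corollary as an immediate consequence of the Greenbaum--Trefethen result (same norm behavior $\Rightarrow$ identical pseudospectra) together with Theorem \ref{pseudopCondThm}. Your additional sketch of the Neumann-series argument for the \cite{GT} implication is extra commentary the paper does not attempt, but it correctly identifies both the easy region and the genuine obstacle.
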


In the case that both matrices $A$ and $B$ have the same size, 
one obtains criteria for their unitary similarity.  This is stated in the 
following corollary.

\begin{corollary}\label{GCorollary}
			Let $A,B\in\M_n$ and suppose $A$ is normal.  The following statements 
			are equivalent.
			\begin{enumerate}
						\item\label{similarity}
									$A$ and $B$ are unitarily similar.
						\item\label{normBehaviorPlusChi}
									$A$ and $B$ have the same norm behavior and 
									characteristic polynomials.
						\item\label{pseudoPlusChi}
									$A$ and $B$ have identical pseudospectra and 
									characteristic polynomials.
			\end{enumerate}
\end{corollary}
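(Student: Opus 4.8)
The plan is to prove Corollary \ref{GCorollary} by establishing a cycle of implications among the three statements, using the earlier results as the engine. Since $A$ and $B$ have the same size here, unitary similarity is available as a genuine possibility, and the key fact I would exploit is that, by the spectral theorem, $A$ is normal, so its norm behavior and pseudospectra are completely determined by its spectrum via \eqref{resolventFormula}. I would organize the argument as $(\ref{similarity})\Rightarrow(\ref{normBehaviorPlusChi})\Rightarrow(\ref{pseudoPlusChi})\Rightarrow(\ref{similarity})$, since each step then draws on a tool already available in the excerpt.

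First I would verify $(\ref{similarity})\Rightarrow(\ref{normBehaviorPlusChi})$, which is the routine direction. If $B=UAU^{*}$ for some unitary $U$, then $p(B)=Up(A)U^{*}$ for every polynomial $p$, so unitary invariance \eqref{unitInv} gives $\|p(B)\|=\|p(A)\|$, establishing the same norm behavior; and unitarily similar matrices share the same characteristic polynomial since $\det(zI-UAU^{*})=\det(U(zI-A)U^{*})=\det(zI-A)$. The step $(\ref{normBehaviorPlusChi})\Rightarrow(\ref{pseudoPlusChi})$ is handed to me almost directly: same norm behavior implies identical pseudospectra by the Greenbaum--Trefethen result cited in the excerpt, and the characteristic polynomial hypothesis simply carries over unchanged. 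So the content of the corollary concentrates entirely in the final implication.

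The hard part will be $(\ref{pseudoPlusChi})\Rightarrow(\ref{similarity})$. Here I would first invoke Theorem \ref{pseudopCondThm}: since $A$ is normal and $A$ and $B$ have identical pseudospectra, $B$ is normal as well. Now both $A$ and $B$ are normal, so by the spectral theorem each is unitarily similar to a diagonal matrix whose diagonal entries are its eigenvalues (listed with multiplicity). To conclude that $A$ and $B$ are unitarily similar, it suffices to show that $A$ and $B$ have the same eigenvalues with the same algebraic multiplicities, for then their diagonalizations involve the same diagonal matrix up to a permutation (which is itself a unitary similarity), and unitary similarity is transitive. The equality of characteristic polynomials in $(\ref{pseudoPlusChi})$ delivers exactly this: $\det(zI-A)=\det(zI-B)$ forces $A$ and $B$ to have identical eigenvalues counted with multiplicity, hence identical diagonal forms up to permutation.

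The main obstacle, and the reason the characteristic polynomial hypothesis cannot be dropped, is that identical pseudospectra alone control only the \emph{set} of eigenvalues (the spectrum $\sigma$) together with the resolvent norm, but say nothing about algebraic multiplicities; since $A$ and $B$ need not even have the same size a priori, identical pseudospectra could hold between, say, a $2\times 2$ and a $3\times 3$ normal matrix with the same two distinct eigenvalues. The characteristic-polynomial condition is precisely what pins down both the size and the multiplicities, converting the set-level agreement furnished by the pseudospectra into the multiset-level agreement needed for unitary similarity. I would therefore be careful to present the deduction of equal multiplicities from equal characteristic polynomials explicitly, since that is where the corollary's two hypotheses genuinely combine.
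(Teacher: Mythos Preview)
Your proposal is correct and follows essentially the same approach as the paper: the same cycle $(\ref{similarity})\Rightarrow(\ref{normBehaviorPlusChi})\Rightarrow(\ref{pseudoPlusChi})\Rightarrow(\ref{similarity})$, with unitary invariance for the first step, the Greenbaum--Trefethen result for the second, and Theorem~\ref{pseudopCondThm} plus the spectral theorem and equal characteristic polynomials for the third. The only cosmetic difference is that for $(\ref{similarity})\Rightarrow(\ref{normBehaviorPlusChi})$ the paper routes through a common diagonal form $D$ (writing $\|p(A)\|=\|p(D)\|=\|p(B)\|$) whereas you conjugate $A$ to $B$ directly; the arguments are equivalent.
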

\begin{proof}
			Condition 1 and the normality of $A$ imply that 
			both $A$ and $B$ are unitarily similar to the same diagonal matrix, 
			say $D$.  In particular, if $A=UDU^{*}$ for some unitary matrix $U$, 
			then $p(A)=Up(D)U^{*}$.  Therefore, $\|p(A)\|=\|p(D)\|$ for any 
			polynomial $p$ by \eqref{unitInv}, and likewise $\|p(B)\|=\|p(D)\|$.
			Moreover, if $A=VBV^{*}$ for some unitary $V$, then factoring
			$zI-A=V(zI-B)V^{*}$ implies that
			\[	\det(zI-A)=\det(zI-B)	\]
			because the determinant is a multiplicative map.  Hence, condition
			\ref{normBehaviorPlusChi} holds.
			
			The fact that condition \ref{normBehaviorPlusChi} implies condition
			\ref{pseudoPlusChi} is an immediate consequence of the result 
			mentioned from \cite{GT}.
			
			Finally, if condition \ref{pseudoPlusChi} holds, Theorem 
			\ref{pseudopCondThm} implies that both $A$ and $B$ are 
			normal matrices of the same size.  In addition, they also
			have the same eigenvalues (counting multiplicities) because
			they have the same characteristic polynomials.  In other words,
			by the spectral theorem, $A$ and $B$ are unitarily similar to 
			the same diagonal matrix, and so condition \ref{similarity} 
			holds, as desired.
\end{proof}

Note that the implication 
``\ref{normBehaviorPlusChi}$\implies$\ref{similarity}'' 
in Corollary \ref{GCorollary} is also a consequence of Corollary 
\ref{sameNormBehaviorCor} above; indeed, that corollary implies 
that $A$ and $B$ are normal matrices of the same size and so unitary 
similarity follows again by the spectral theorem.  For yet another 
proof of the implication ``\ref{normBehaviorPlusChi}$\implies$\ref{similarity},''
we refer the reader to \cite{G}.

\section{The distance formula, its consequences, and criteria for normality.}\label{distSection}

We now give a straightforward and self-contained proof of Theorem \ref{normalIff} and 
state some of its own interesting consequences.  We also mention two other 
approaches by which Theorem \ref{normalIff} can be proved. 

\begin{proof}[Proof of Theorem \ref{normalIff}]
			In view of the remarks preceding \eqref{resolventFormula}, 
			we only need to show that if \eqref{distFormula} holds, 
			then $T$ is normal. Instead of \eqref{distFormula}, let us 
			assume the following equivalent formulation:
			\begin{equation}\label{maxFormula}
						\|(zI-T)^{-1}\|
						=\max\left\{|z-\lambda|^{-1}:\lambda\in\sigma(T)\right\}
						\quad\text{ for all }z\notin\sigma(T).
			\end{equation}
			By Schur's Theorem (see Theorem 2.3.1 on page 79 and the Remark 
			on page 80 in \cite{HJ1}), there is a unitary matrix $U$ and a lower
			triangular matrix $L$ such that $T=ULU^{*}$.  In this case, the 
			main diagonal of $L$ must consist of the eigenvalues of $T$
			(in any desired order but counting multiplicities).    
			So, $zI-L$ is lower triangular and has $(z-\lambda)$ with 
			$\lambda\in\sigma(T)$ as main diagonal entries.  Consequently, 
			$(zI-L)^{-1}$ is also lower triangular with entries $(z-\lambda)^{-1}$,
			$\lambda\in\sigma(T)$, on its main diagonal. Since 
			$(zI-T)^{-1}=U(zI-L)^{-1}U^{*}$, \eqref{maxFormula} can be 
			restated as
			\begin{equation}\label{maxFormulaL}
						\|(zI-L)^{-1}\|
						=\max\left\{|z-\lambda|^{-1}:\lambda\in\sigma(L)\right\}
						\quad\text{ for all }z\notin\sigma(L).
			\end{equation}
			We show that $L$ must be a diagonal matrix.  To that end, we use
			the following well-known result (see Section 0.7.3 in \cite{HJ1}).
			
			\begin{lemma}\label{invLemma}
						Suppose $M\in\M_n$ is invertible and has block form
						\begin{equation}\label{blockM}
									M=\left[
									\begin{array}{cc}
												A	&	0\\
												B	&	C
									\end{array}\right],
						\end{equation}
						where $A$ and $C$ are square matrices.  Then $A$ and $C$ 
						are invertible, and the inverse of $M$ has block form
						\begin{equation}\label{blockInvM}
									\left[
									\begin{array}{cc}
												A^{-1}					&	0\\
												-C^{-1}BA^{-1}	&	C^{-1}
									\end{array}\right].
						\end{equation}
			\end{lemma}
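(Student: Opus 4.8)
The plan is to prove the block-triangular inversion formula in Lemma~\ref{invLemma} by a direct verification argument, which reduces the claim to checking that the proposed matrix in \eqref{blockInvM} is a genuine two-sided inverse of $M$. First I would establish that $A$ and $C$ must be invertible. Since $M$ is lower block-triangular, its determinant factors as $\det M = \det A \cdot \det C$; because $M$ is invertible we have $\det M \neq 0$, forcing both $\det A \neq 0$ and $\det C \neq 0$, so $A^{-1}$ and $C^{-1}$ exist. This makes the candidate inverse in \eqref{blockInvM} well-defined.

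With invertibility of $A$ and $C$ in hand, the next step is purely computational: form the block product of $M$ with the candidate matrix. Writing
\[
	\left[
	\begin{array}{cc}
		A	&	0\\
		B	&	C
	\end{array}\right]
	\left[
	\begin{array}{cc}
		A^{-1}					&	0\\
		-C^{-1}BA^{-1}	&	C^{-1}
	\end{array}\right],
\]
I would multiply out the four blocks using ordinary block matrix multiplication. The top-left block gives $AA^{-1}=I$, the top-right gives $0$, the bottom-left gives $BA^{-1} + C(-C^{-1}BA^{-1}) = BA^{-1}-BA^{-1}=0$, and the bottom-right gives $CC^{-1}=I$. Hence the product is the identity. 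To confirm this is a genuine two-sided inverse, I would also verify the product in the opposite order, or simply invoke the fact that for square matrices a one-sided inverse is automatically two-sided.

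The main point to be careful about, rather than a genuine obstacle, is justifying that block matrix arithmetic behaves exactly like scalar arithmetic here: this is valid precisely because the block partitions of $M$ and of the candidate inverse are conformal, so that each block product $AA^{-1}$, $C^{-1}B$, and so on is dimensionally well-defined. Since $A$ and $C$ are square of matching sizes and $B$ has the appropriate rectangular shape, all the indicated products make sense. Once the computation closes, the uniqueness of matrix inverses identifies \eqref{blockInvM} as $M^{-1}$, completing the proof. Because this is a standard result (cited from Section~0.7.3 in \cite{HJ1}), the argument is expected to be short and free of serious difficulty; the real interest lies in how this lemma is subsequently applied to show that the triangular factor $L$ must in fact be diagonal.
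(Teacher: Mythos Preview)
Your proof is correct. The paper itself does not prove Lemma~\ref{invLemma} at all; it simply cites it as a well-known result from Section~0.7.3 of \cite{HJ1} and then uses it. Your direct verification via the determinant factorization $\det M=\det A\cdot\det C$ followed by block multiplication is the standard argument and exactly what one would expect to see behind that citation.
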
			
			
			To simplify notation, label the entries down the main diagonal in $L$ 
			as $\lambda_n$, $\lambda_{n-1}$, $\ldots$, $\lambda_2$, and $\lambda_1$. 
			For $k=1,\ldots,n$, let $L_k$ denote the principal submatrix of $L$ 
			obtained by removing the first $n-k$ rows and columns of $L$.  In 
			particular, each $L_k$ is a $k\times k$ lower triangular matrix, 
			$L_{n}=L$, and $L_1=\lambda_1$.  
			
			For each $1\leq k\leq n-1$, 
			\[	L_{k+1}=\left[
					\begin{array}{cc}
								\lambda_{k+1}	&	0\\
								b_{k}					&	L_{k}
					\end{array}\right]	\]
			for some vector $b_k\in\C^{k}$ and so, by Lemma \ref{invLemma}, 
			\begin{equation}\label{blockRes}
						(zI-L_{k+1})^{-1}=\left[
						\begin{array}{cc}
									(z-\lambda_{k+1})^{-1}	&	0\\
									(zI-L_{k})^{-1}b_{k}(z-\lambda_{k+1})^{-1}
									&	(zI-L_{k})^{-1}
						\end{array}\right]
			\end{equation}
			for every $z \notin \sigma(L)$.  In particular, if 
			$v=(1,0,\ldots,0)\in\C^{k+1}$, then
			\[	\|(zI-L_{k+1})^{-1}\|^{2}\geq\|(zI-L_{k+1})^{-1}v\|_2^{2},	\]
			or equivalently,
			\begin{equation}\label{opEstimate}
						\|(zI-L_{k+1})^{-1}\|^{2}\geq
						|z-\lambda_{k+1}|^{-2}
						+\|(zI-L_{k})^{-1}b_{k}(z-\lambda_{k+1})^{-1}\|_{2}^{2}.
			\end{equation}
			Similarly, we see that
			\begin{equation}\label{smallerRes}
						\|(zI-L_{n})^{-1}\|
						\geq\|(zI-L_{n-1})^{-1}\|
						\geq\ldots
						\geq\|(zI-L_{1})^{-1}\|.
			\end{equation}
			We now show that $b_k=0$. Indeed, if $z\in\C$ is chosen so that 
			the maximum on the right-hand side of \eqref{maxFormulaL} equals 
			$|z-\lambda_{k+1}|^{-1}$, then
			\begin{align*}
						|z-\lambda_{k+1}|^{-2}&=\|(zI-L_{n})^{-1}\|^{2}\\
						&\geq\|(zI-L_{k+1})^{-1}\|^{2}\\
						&\geq|z-\lambda_{k+1}|^{-2}
						+\|(zI-L_{k})^{-1}b_{k}(z-\lambda_{k+1})^{-1}\|_{2}^{2}
			\end{align*}
			by \eqref{maxFormulaL} and \eqref{opEstimate}. Therefore,
			\[	(zI-L_{k})^{-1}b_{k}(z-\lambda_{k+1})^{-1}=0	\]
			and so $b_{k}=0$.  
			Since each column below a main diagonal entry of $L$ 
			is a zero vector, $L$ is a diagonal matrix and so
			$T$ is normal, as desired.
\end{proof}

Even though the distance formula \eqref{distFormula} does 
not hold for \emph{non-normal} $n\times n$ matrices, the 
following inequality does: for any $T\in\M_n$,
\begin{equation}\label{distIneq}
			\dist(z,\sigma(T))\geq\|(zI-T)^{-1}\|^{-1}
			\quad\text{ for all }z\notin\sigma(T). 
\end{equation}
This fact is verified using the Neumann (geometric) series;    
for if $A$ and  $B$ are in $\M_n$, $A$ is invertible, and 
$\|A-B\|<\|A^{-1}\|^{-1}$ holds, then $BA^{-1}$ is invertible 
and so  $B$ is also invertible.  It follows that if $(zI-T)$ 
is invertible and the inequality  
$|z-w|=\|(zI-T)-(wI-T)\|<\|(zI-T)^{-1}\|^{-1}$
holds, then $(wI-T)$ is also invertible.  In other words, if 
$z\notin\sigma(T)$ and $|z-w|<\|(zI-T)^{-1}\|^{-1}$, then
$w\notin\sigma(T)$ and so the inequality in \eqref{distIneq} 
is obtained.

Our proof of Theorem \ref{normalIff} also reveals that equality 
\eqref{distFormula} need not hold \emph{for all} 
$z\notin\sigma(T)=\{\lambda_1,\ldots,\lambda_n\}$.  Rather, normality 
of $T$ follows from the weaker (equivalent) condition\footnote{Even 
better, if $d$ is the number of 
distinct eigenvalues of $T$ and one eigenvalue is simple, then equality 
\eqref{pointseq} need only hold for $z_k$ near each of the other $d-1$ 
eigenvalues to conclude that $T$ be normal.} that for 
each $1\leq k\leq n-1$,	there is a $z_k\in\C$ so that
\[	\|(z_k I-T)^{-1}\|=|z_k-\lambda_k|^{-1}.	\]

We now state the following criteria for normality.

\begin{theorem}\label{completethm}
			The following statements are equivalent for a matrix $T\in\M_n$.
			\begin{enumerate}
						\item\label{distFormCond}
									For all $z\notin\sigma(T)$, 
									\[	\|(zI-T)^{-1}\|=\frac{1}{\dist(z,\sigma(T))}.	\]
						\item	For each $1\leq k\leq n-1$, there is a $z_k\in\C$ so that									
									\begin{equation}\label{pointseq}
												\|(z_k I-T)^{-1}\|=|z_k-\lambda_k|^{-1},
									\end{equation}
									where $\lambda_1,\ldots,\lambda_n$ denote the eigenvalues
									of $T$ (counting multiplicities).
						\item\label{normalCond}	$T$ is normal.
						\item\label{pnormCond}
									For every polynomial $p$,
									\begin{equation}\label{pNormCondition}
												\|p(T)\|=\max\{|p(\lambda)|:\lambda\in\sigma(T)\}.
									\end{equation}
			\end{enumerate}
\end{theorem}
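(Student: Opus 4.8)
The plan is to prove the four statements equivalent by establishing a single cycle of implications: normality of $T$ implies the polynomial norm formula \eqref{pNormCondition}; \eqref{pNormCondition} implies the distance formula in statement \ref{distFormCond}; that distance formula implies the pointwise condition \eqref{pointseq}; and \eqref{pointseq} in turn implies normality. Two of these links are essentially already in hand. Theorem \ref{normalIff} shows that the distance formula and normality are equivalent, and the proof of that theorem---refined exactly as in the remark preceding this theorem---shows that \eqref{pointseq} by itself forces $T$ to be normal. Thus the genuinely new work is to connect the polynomial norm formula \eqref{pNormCondition} to the rest.

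For the passage from normality to \eqref{pNormCondition}, I would invoke the spectral theorem to write $T=U\Lambda U^{*}$ with $\Lambda$ diagonal and $U$ unitary. Then $p(T)=Up(\Lambda)U^{*}$ for every polynomial $p$, and $p(\Lambda)$ is the diagonal matrix with diagonal entries $p(\lambda)$, $\lambda\in\sigma(T)$. The unitary invariance \eqref{unitInv} of the norm then gives $\|p(T)\|=\|p(\Lambda)\|=\max\{|p(\lambda)|:\lambda\in\sigma(T)\}$, which is precisely \eqref{pNormCondition}.

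For the converse passage from \eqref{pNormCondition} to the distance formula in statement \ref{distFormCond}, fix $z\notin\sigma(T)$. Since $zI-T$ is invertible, the Cayley--Hamilton theorem lets me write its inverse as a polynomial in $T$, say $(zI-T)^{-1}=q(T)$ for a polynomial $q$ depending on $z$. Applying \eqref{pNormCondition} with $p=q$ gives $\|(zI-T)^{-1}\|=\max\{|q(\lambda)|:\lambda\in\sigma(T)\}$. The crucial point is to evaluate $q$ on the spectrum: if $v$ is an eigenvector with $Tv=\lambda v$, then $q(T)v=q(\lambda)v$, while $(zI-T)^{-1}v=(z-\lambda)^{-1}v$, so $q(\lambda)=(z-\lambda)^{-1}$ for each $\lambda\in\sigma(T)$. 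Hence $\|(zI-T)^{-1}\|=\max\{|z-\lambda|^{-1}:\lambda\in\sigma(T)\}=1/\dist(z,\sigma(T))$, which is statement \ref{distFormCond}.

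Finally, statement \ref{distFormCond} implies \eqref{pointseq} at once: for each $k$ I choose $z_k$ near $\lambda_k$ but distinct from it, so that $\lambda_k$ is a nearest point of $\sigma(T)$ and hence $\dist(z_k,\sigma(T))=|z_k-\lambda_k|$; then statement \ref{distFormCond} yields \eqref{pointseq}. The remaining implication from \eqref{pointseq} back to normality \ref{normalCond} is exactly the refined Schur-triangularization argument of Theorem \ref{normalIff}, with the single eigenvalue not covered by \eqref{pointseq} placed in the bottom-right corner of $L$ so that the vanishing of every subdiagonal block $b_k$ can be deduced. I expect the main obstacle to be the step from \eqref{pNormCondition} to statement \ref{distFormCond}---recognizing the resolvent as a polynomial in $T$ and correctly identifying $q(\lambda)=(z-\lambda)^{-1}$ on $\sigma(T)$---since every other link either follows from the spectral theorem or is inherited from Theorem \ref{normalIff}.
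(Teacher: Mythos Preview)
Your proposal is correct and follows the same cycle of implications as the paper: $3\Rightarrow 4$ via the spectral theorem, $4\Rightarrow 1$ by expressing the resolvent as a polynomial in $T$, and the remaining links $1\Rightarrow 2\Rightarrow 3$ via Theorem~\ref{normalIff} and its refinement. The only difference is in the $4\Rightarrow 1$ step: the paper cites Hermite interpolation (Theorem~6.2.9 of \cite{HJ2}) to produce a polynomial $q_z$ with $q_z(\lambda)=(z-\lambda)^{-1}$ on $\sigma(T)$, whereas you obtain $q$ from Cayley--Hamilton and verify $q(\lambda)=(z-\lambda)^{-1}$ directly via the eigenvector equation---a slightly more self-contained route to the same conclusion.
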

\begin{proof}
			It is only left to prove that ``\ref{pnormCond}$\implies$\ref{distFormCond}'' as
			 ``\ref{normalCond}$\implies$\ref{pnormCond}'' is a straightforward consequence 
			of the spectral theorem; for if $T$ is normal and $p$ is a polynomial, 
			there is a unitary matrix $U$ so that $T=U\Lambda U^{*}$ with $\Lambda$ 
			as in \eqref{diagWithLambdas}.  In this case,
			\[	\|p(T)\|=\|Up(\Lambda)U^{*}\|=
					\|p(\Lambda)\|=\max\{|p(\lambda)|:\lambda\in\sigma(T)\}	\]
			and so condition \ref{pnormCond} holds.
			
			Now suppose condition \ref{pnormCond} holds.
			Let $z\notin\sigma(T)$ be fixed and define $f(t) =(z-t)^{-1}$.  
			Let $\lambda_1, \ldots, \lambda_k$ denote the distinct 
			eigenvalues of $T$.  For $1\leq i\leq k$, $\lambda_i$ is a zero of the 
			minimal polynomial $m$ of $T$.  It follows from Theorem 6.2.9 in 
			\cite{HJ2} that $(zI-T)^{-1} = q_z(T)$, where $q_z(t)$ is any polynomial 
			of degree at most $n-1$ that interpolates $f(t)$ and 
			its derivatives at the zeros of $m$. That is, $f^{(u)}(\lambda_i) 
			= q_z^{(u)}(\lambda_i)$ for $u=0,\ldots,(s_i-1)$
			and $i=1, \ldots, k$ (see page 390 in \cite{HJ2}).      
			Therefore, \eqref{pNormCondition} implies
			$\|q_{z}(T)\|=\max\{|q_{z}(\lambda)|:\lambda\in\sigma(T)\}$,
			or equivalently,
			\[	\|(zI-T)^{-1}\|=\max\{|z-\lambda|^{-1}:\lambda\in\sigma(T)\}
					=\frac{1}{\dist(z,\sigma(T))}
					\quad\text{ for }z\notin\sigma(T).	\]
			Hence, condition \ref{distFormCond} holds and the proof is now complete.
\end{proof}

Surprisingly, the criteria for normality appearing in Theorem \ref{completethm}
are absent from the literature.  Thus, these criteria may be considered addenda 
to the 89 other characterizations of normal matrices that appear in \cite{EI}
and \cite{GJSW}.  

\begin{corollary}\label{twoCorollary}
			If $T\in\M_2$ and 
			\[	\|(zI-T)^{-1}\|=\frac{1}{\dist(z,\sigma(T))}	\]
			holds for one point $z\notin\sigma(T)$, then $T$ is normal.
\end{corollary}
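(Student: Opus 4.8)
The plan is to specialize the argument in the proof of Theorem \ref{normalIff} to the case $n=2$, where that induction collapses to a single step and hence needs the distance formula at only one point. Equivalently, one can observe that for $n=2$ the second condition of Theorem \ref{completethm} requires the pointwise equality \eqref{pointseq} only for $k=1$, i.e.\ at a single point; so the hypothesis here is exactly what is needed, provided the eigenvalue indexing is arranged correctly.

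First I would unpack the hypothesis. Writing $\sigma(T)=\{\lambda_1,\lambda_2\}$ counted with multiplicity, the assumed equality at the single point $z\notin\sigma(T)$ reads
\[	\|(zI-T)^{-1}\|=\frac{1}{\dist(z,\sigma(T))}=\max\{|z-\lambda_1|^{-1},|z-\lambda_2|^{-1}\}=|z-\mu|^{-1},	\]
where $\mu\in\sigma(T)$ denotes an eigenvalue nearest to $z$ (so that $|z-\mu|^{-1}$ attains the maximum).

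Next, invoking Schur's theorem as in the proof of Theorem \ref{normalIff}, I would write $T=ULU^{*}$ with $U$ unitary and $L$ lower triangular, choosing the order of the diagonal entries so that $\mu$ occupies the top-left corner; thus
\[	L=\left[\begin{array}{cc}\mu & 0\\ b & \nu\end{array}\right],	\]
where $\nu$ is the remaining eigenvalue and $b\in\C$. By unitary invariance \eqref{unitInv} the hypothesis becomes $\|(zI-L)^{-1}\|=|z-\mu|^{-1}$. Applying the block inversion formula of Lemma \ref{invLemma} to obtain \eqref{blockRes} and testing against $v=(1,0)$ exactly as in \eqref{opEstimate} gives
\[	|z-\mu|^{-2}=\|(zI-L)^{-1}\|^{2}\geq|z-\mu|^{-2}+|z-\nu|^{-2}|b|^{2}|z-\mu|^{-2},	\]
which forces $b=0$ since the factors $|z-\nu|^{-2}$ and $|z-\mu|^{-2}$ are nonzero. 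Hence $L$ is diagonal and $T$ is normal.

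Because this is just the one-step instance of the induction already carried out in Theorem \ref{normalIff}, I do not expect a genuine obstacle; the only point requiring care is the eigenvalue bookkeeping. Specifically, one must place a \emph{nearest} eigenvalue $\mu$ (not the farther one) in the top-left position, so that the single available equality is precisely the one that annihilates the off-diagonal entry. This choice also disposes of the degenerate cases transparently: when $z$ is equidistant from the two eigenvalues the maximum is attained at either, so either may serve as $\mu$; and when the eigenvalue is repeated ($\lambda_1=\lambda_2$) the distance formula automatically reads $|z-\mu|^{-1}$, so the same computation applies verbatim.
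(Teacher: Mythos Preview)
Your proposal is correct and is essentially the approach the paper intends: Corollary \ref{twoCorollary} is stated without a separate proof because it is the $n=2$ instance of condition~2 in Theorem \ref{completethm} (equivalently, the one-step specialization of the induction in the proof of Theorem \ref{normalIff}), exactly as you describe. Your explicit care in placing the \emph{nearest} eigenvalue $\mu$ in the top-left corner of $L$ is precisely the bookkeeping the paper leaves implicit when it allows the Schur triangularization to list the eigenvalues ``in any desired order.''
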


Hence, for non-normal $2\times 2$ matrices $T$, the inequality in 
\eqref{distIneq} must be \emph{strict} for all $z\notin\sigma(T)$.  
For instance, if
\[	N=\left[
		\begin{array}{cc}
					0	&	1\\
					0	&	0
		\end{array}\right],		\]
then $\sigma(N)=\{0\}$ and so 
$\|(zI-N)^{-1}\|>\dist^{-1}(z,\sigma(N))=|z|^{-1}$ holds for 
all $z\neq 0$ by \eqref{distIneq} and Corollary \ref{twoCorollary}.

\begin{remark} The sufficient condition for normality in 
			Theorem \ref{normalIff} may also be deduced two other ways. 
			\begin{enumerate}
			\item If $\sigma_{\epsilon}(T)$ denotes the 
			``$\epsilon$-pseudospectrum'' of $T$, that is,
			\[	\sigma_{\epsilon}(T)\Mydef\{z:\|(zI-T)^{-1}\|>\epsilon^{-1}\},	\] 
			then Theorem 2.2 of \cite{TE} states that\footnote{The set on 
			the right-hand side of \eqref{TEcondition} can be rewritten as 
			$\{z:\dist(z,\sigma(T))<\epsilon\}$.} if the equality
			\begin{equation}\label{TEcondition}
						\sigma_{\epsilon}(T)
						=\{\zeta+\xi: \zeta\in\sigma(T)\text{ and }|\xi|<\epsilon \}
			\end{equation} 
			holds for all $\epsilon>0$, then the matrix $T$ is normal. 
			In a sketch of a proof, the authors assert that one can deduce 
			simultaneous diagonalizability of $T$ and $T^*$ from content in 
			a later section in \cite{TE} on eigenvalue perturbation theory.  
			Therefore, if equality \eqref{distFormula} holds, then the 
			equality of the sets in \eqref{TEcondition} follows, and so $T$ 
			is normal. 
			
			\item The fact that \eqref{distFormula} implies normality of $T$ 
			may also be deduced in the context of radial matrices.  
			For $z \notin \sigma(T)$, equality \eqref{distFormula} 
			implies that $(zI-T)^{-1}$ is radial and so unitarily similar to
			\begin{equation}\label{Horneq}
						 \|(zI-T)^{-1}\|\left(U \oplus B\right),
			\end{equation}
			where $U\in\M_k$ is unitary, $1\leq k\leq n$, and $B\in\M_{n-k}$
			has spectral radius less than 1 and $\|B\|\leq 1$
			(see Problem 27(g) on page 45 in \cite{HJ2}).  To conclude normality 
			of $T$, one can obtain from \eqref{Horneq} orthonormal eigenvectors
			of $(zI-T)^{-1}$ (corresponding to the eigenvalue $(z-\lambda)^{-1}$)  
			and so orthonormal eigenvectors for $T$; thus, one can form a complete 
			set of orthonormal eigenvectors of $T$ through a choice of $z$ near each 
			eigenvalue. 
			\end{enumerate}
\end{remark}

\textbf{Acknowledgments.}
The authors wish to thank Thomas Ransford and Roger Horn for comments 
on a preliminary version of this paper calling to our attention 
Theorem 2.2 of \cite{TE} and an alternative approach to obtaining 
Theorem \ref{normalIff} using unitary similarity to \eqref{Horneq}, 
respectively.

\end{document}